\documentclass[12pt,reqno]{amsart}
\usepackage{amsmath, amsthm, amscd, amsfonts, amssymb, graphicx}
\setlength{\textheight}{220mm} \setlength{\textwidth}{160mm}
\setlength{\oddsidemargin}{1.25mm}
\setlength{\evensidemargin}{1.25mm} \setlength{\topmargin}{0mm}

\setbox0=\hbox{$+$}
\newdimen\plusheight
\plusheight=\ht0
\def\+{\;\lower\plusheight\hbox{$+$}\;}

\setbox0=\hbox{$-$}
\newdimen\minusheight
\minusheight=\ht0
\def\-{\;\lower\minusheight\hbox{$-$}\;}

\setbox0=\hbox{$\cdots$}
\newdimen\cdotsheight
\cdotsheight=\plusheight
\def\cds{\lower\cdotsheight\hbox{$\cdots$}}

\renewcommand{\(}{\left\(}
\renewcommand{\)}{\right\)}

\renewcommand{\pmod}[1]{\,(\textup{mod}\,#1)}
\numberwithin{equation}{section}
\theoremstyle{plain}
\newtheorem{theorem}{Theorem}[section]
\newtheorem{lemma}[theorem]{Lemma}

\begin{document}
\noindent{\bf Infinite Families of Congruences Modulo 5 for Ramanujan's General Partition Function}\vskip3mm

\centerline{\bf Nipen Saikia\footnotemark[1] \quad and\quad Jubaraj Chetry}
\footnotetext[1]{\Small{\textit{Corresponding author.}}}
\begin{center}\textit{Department of Mathematics, Rajiv Gandhi University, \\Rono Hills, Doimukh-791112, Arunachal Pradesh, India.}\\
\textit{E. Mail: nipennak@yahoo.com (N. Saikia)\\\hskip3cm jubarajchetry470@gmail.com(J. Chetry)}\end{center}\vskip 3mm

\noindent{\footnotesize{\bf Abstract:}} For any non-negative integer $n$ and non-zero integer  $r$, let $p_r(n)$  denote Ramanujan's general partition function. By employing $q$-identities, we prove some new Ramanujan-type congruences modulo 5 for  $p_r(n)$ for $r=-(5\lambda+1), -(5\lambda+3), -(5\lambda+4), -(25\lambda+1)$, $-(25\lambda+2)$, and any integer $\lambda$.
\vskip 5mm

\noindent{\bf MSC 2010}: 11P82; 11P83; 05A15; 05A17.\vskip 2mm

\noindent {\bf Keywords and Phrases}: Ramanujan's general partition function; partition congruence;  $q$-identities.
 
\section{Introduction}A partition of a positive integer $n$ is a non-increasing sequence of positive integers, called parts, whose sum equals $n$. For example, $n=3$ has three partitions, namely, 
  	$$3,\quad 2+1,\quad 1+1+1.$$ If $p(n)$ denote the number of partitions of $n$, then $p(3)=3$. The generating function for  $p(n)$ is given by
  	  	\begin{equation}\label{pngen}\sum_{n=0}^\infty p(n)q^n=\dfrac{1}{(q; q)_\infty}, \quad p(0)=1,%
  	  	\end{equation}where, here and throughout this paper \begin{equation}(a;q)_\infty=\prod_{k=1}^\infty(1-aq^{k-1}).\end{equation}  	Ramanujan \cite{sr, sr2} proved following  beautiful congruences for $p(n)$:
  	  	\begin{equation}p(5n+4)\equiv0 \pmod 5,
  	  	\end{equation}
  	  	\begin{equation}p(7n+5)\equiv0 \pmod 7,
  	  	\end{equation} and
  	  	\begin{equation}p(13n+6)\equiv0 \pmod {11}.
  	  	\end{equation} 
 In a letter to Hardy written from Fitzroy House late in 1918 \cite[p. 192-193]{bc1}, Ramanujan introduced the general partition function $p_r(n)$ for integers $n\ge 0$ and  $r\ne 0$ as \begin{equation}\label{eq1}
{\sum_{n=0}^\infty{p_r(n)q^n}=\frac{1}{(q;q)_\infty^r}}, \qquad |q|<1.
\end{equation}
In order to describe the partition function $p_r(n)$, we first give the notion of colour partition.  A part in a partition of $n$ is said to have  $r$ colours if there are $r$ copies of each part available and all of them are viewed as distinct objects.

Now, for $r>1$, $p_r(n)$ denotes the number of partitions of $n$ where each part may have $r$ distinct colours.  For example,  if each part in the partitions of 3 have \textit{two} colours, say red and green, then the number of two colour partitions of $3$ is 10, namely  $$3_r,\quad 3_g, \quad2_r+1_r, \quad2_r+1_g,\quad 2_g+1_g, \quad 2_g+1_r,$$$$ 1_r+1_r+1_r,\quad 1_g+1_g+1_g,\quad 1_r+1_g+1_g,\quad 1_r+1_r+1_g.$$ Thus, $p_2(3)=10$. For $r=1$, $p_1(n)$ is the usual partition function $p(n)$ which counts the number of unrestricted partitions of a non-negative integer $n$.  Gandhi \cite{JM} studied the colour partition function $p_r(n)$ and found Ramanujan-type congruences for  certain values of $r$.  For example, he proved that 
$$p_2(5n+3)\equiv 0\pmod 5 $$and
$$p_8(11n+4)\equiv 0\pmod{11}.$$ Newman \cite{ab} also found some congruences for colour partition. Recently, Hirschhorm \cite{Hirs} found congruences for $p_3(n)$ modulo higher powers of 3.
 
If $r<0$, then \begin{equation}\label{mr}p_r(n)=\left(p_r(n, e)-p_r(n, o)\right),\end{equation} where $p_r(n, e)$ (resp. $p_r(n, o)$) is the number of partitions of $n$ with even (resp. odd) number of distinct parts  and each part have $r$ colours. For example, if $n=5$ and $r=-1$ then $p_{-1}(5, e)=2$ with relevant partitions $4+1$ and $3+2$, and $p_{-1}(5, o)=1$ with the relevant partition 5. Thus, $p_{-1}(5)=2-1=1$. Similarly, we see that $p_{-2}(3)=4-2=2$. The case $r=-1$ in \eqref{mr} is the famous Euler's pentagonal number theorem.

Ramanujan \cite{bc1} showed that, if $\lambda$ is a positive integer and  $\overline{w}$ is a prime of the form $6\lambda-1$, then  
\begin{equation}
p_{-4}\left(n\overline{w}-\frac{(\overline{w}+1)}{6}\right)\equiv0\pmod{\overline{w}}.
\end{equation} 
Ramanathan \cite{kg}, Atkin \cite{ao}, and Ono \cite{ko}. 
Baruah and Ojah \cite{NK} also proved some congruences for $p_{-3}(n)$. Recently, Baruah and Sharma \cite{NB} proved some arithmetic identities and congruences of  $p_r(n)$ for some particular negative values of $r$. 
    
In this paper, we prove some new Ramanujan-type congruences modulo 5 for  $p_r(n)$ for  $r=-(5\lambda+1), -(5\lambda+3), -(5\lambda+4), -(25\lambda+1)$, and $-(25\lambda+2)$, where  $\lambda$ is any integer, by employing some $q$-identities. In particular, we prove the following infinite families of congruences modulo 5 for the $p_r(n)$:

 \begin{theorem}\label{thm0} For any integer $\lambda$ and $\ell=3, 4$, we have
	$$p_{-(5\lambda+1)}(5n+\ell)\equiv0\pmod5.$$
\end{theorem}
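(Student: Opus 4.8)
The plan is to start from the definition \eqref{eq1}: since $\sum_{n\ge 0}p_r(n)q^n=(q;q)_\infty^{-r}$, taking $r=-(5\lambda+1)$ gives
$$\sum_{n=0}^{\infty}p_{-(5\lambda+1)}(n)q^n=(q;q)_\infty^{5\lambda+1}.$$
The engine of the argument is the elementary congruence $(q;q)_\infty^{5}\equiv(q^5;q^5)_\infty\pmod 5$, which comes from $(1-q^k)^5\equiv 1-q^{5k}\pmod 5$ applied factor by factor. Raising this to the $\lambda$-th power and keeping one spare factor of $(q;q)_\infty$ yields
$$(q;q)_\infty^{5\lambda+1}\equiv(q^5;q^5)_\infty^{\lambda}\,(q;q)_\infty\pmod 5.$$
First I would check that this reduction is legitimate for every integer $\lambda$, not merely $\lambda\ge 0$: for negative $\lambda$ the factor $(q^5;q^5)_\infty^{\lambda}$ is a reciprocal, and since a power series with integer coefficients and constant term $1$ has an integer-coefficient inverse, congruences modulo $5$ are preserved under inversion. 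Thus in all cases $(q^5;q^5)_\infty^{\lambda}$ is an integer power series in $q^5$ alone.

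The second step isolates the arithmetic progression. Writing $(q^5;q^5)_\infty^{\lambda}=\sum_{m}a_m q^{5m}$ with $a_m\in\mathbf{Z}$, and letting $c_j$ denote the coefficient of $q^j$ in $(q;q)_\infty$, the coefficient of $q^{5n+\ell}$ in the product $(q^5;q^5)_\infty^{\lambda}(q;q)_\infty$ is
$$\sum_{m}a_m\,c_{5(n-m)+\ell}.$$
Because the outer factor contributes only exponents divisible by $5$, every index $5(n-m)+\ell$ occurring here is $\equiv\ell\pmod 5$. Hence the entire statement collapses to a single fact about $(q;q)_\infty$ itself: that $c_j=0$ whenever $j\equiv 3$ or $j\equiv 4\pmod 5$.

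That fact is supplied by Euler's pentagonal number theorem,
$$(q;q)_\infty=\sum_{k=-\infty}^{\infty}(-1)^k q^{k(3k-1)/2},$$
which shows $c_j$ is nonzero only when $j$ is a generalized pentagonal number $g_k=k(3k-1)/2$. Reducing modulo $5$ and using $2^{-1}\equiv 3$, one computes $g_k\equiv 4k^2+2k\pmod 5$, whose values for $k\equiv 0,1,2,3,4$ are $0,1,0,2,2$. Therefore $g_k\in\{0,1,2\}\pmod 5$ for every $k$, so no pentagonal number is congruent to $3$ or $4$ modulo $5$. Consequently $c_{5(n-m)+\ell}=0$ for $\ell=3,4$, every term of the displayed sum vanishes, and $p_{-(5\lambda+1)}(5n+\ell)\equiv 0\pmod 5$, as claimed.

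The conceptual heart of the proof is precisely this pentagonal-number computation: everything rests on the support of $(q;q)_\infty$ missing the residue classes $3$ and $4$ modulo $5$, which is exactly what produces the two congruences $\ell=3,4$ and, equally, explains why $\ell=0,1,2$ yield no such identity. The only genuinely delicate bookkeeping is in the first step, namely justifying that the reduction $(q;q)_\infty^{5\lambda}\equiv(q^5;q^5)_\infty^{\lambda}$ survives for reciprocal series when $\lambda<0$; once that is in place, the result follows with no further case analysis or estimation.
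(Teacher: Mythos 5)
Your proof is correct. It follows the paper's skeleton exactly up to the congruence $(q;q)_\infty^{5\lambda+1}\equiv(q^5;q^5)_\infty^{\lambda}(q;q)_\infty\pmod 5$ obtained from Lemma \ref{lem2}, but then diverges in how it certifies that $(q;q)_\infty$ has no terms $q^{j}$ with $j\equiv 3,4\pmod 5$: the paper invokes Ramanujan's identity \eqref{eq2}, whose three pieces $R(q^5)$, $-q$, $-q^2/R(q^5)$ visibly contribute only exponents congruent to $0,1,2$ modulo $5$, whereas you invoke Euler's pentagonal number theorem and check directly that $k(3k-1)/2\equiv 4k^2+2k\pmod 5$ takes only the values $0,1,2$. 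The two are equivalent in effect, since Ramanujan's identity is precisely a refinement of the pentagonal $5$-dissection, but your route is more elementary and exactly parallels the paper's own proof of Theorem \ref{thm1}, which uses the analogous residue computation for triangular numbers via Lemma \ref{lem4}; the paper's reliance on \eqref{eq2} costs nothing extra since that identity is needed anyway, through Lemma \ref{flem}, for Theorems \ref{thm3}--\ref{thm6}. You also make explicit a point the paper passes over silently: for negative $\lambda$ the factor $(q^5;q^5)_\infty^{\lambda}$ is a reciprocal, and the reduction survives because a congruence between units of $\mathbf{Z}[[q]]$ (power series with constant term $1$) is preserved under inversion. That is a small but genuine improvement in rigor over the published argument.
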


 \begin{theorem}\label{thm1} For any integer $\lambda$ and $\ell=2, 3, 4$, we have
$$p_{-(5\lambda+3)}(5n+\ell)\equiv0\pmod5.$$
\end{theorem}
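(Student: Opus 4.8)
The plan is to reduce everything modulo $5$ to Jacobi's cube identity, working throughout with formal power series over $\mathbf{Z}$. The only arithmetic input is the elementary congruence $(1-q^k)^5\equiv 1-q^{5k}\pmod 5$, which upon taking the product over $k$ yields $(q;q)_\infty^5\equiv (q^5;q^5)_\infty\pmod 5$. Since both sides are invertible series with constant term $1$, the congruence persists after raising to any integer power, so that $(q;q)_\infty^{5\lambda}\equiv (q^5;q^5)_\infty^{\lambda}\pmod 5$ for \emph{every} integer $\lambda$ (the case $\lambda<0$ following because $A\equiv B\pmod 5$ forces $A^{-1}\equiv B^{-1}\pmod 5$ for units).

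First I would rewrite the generating function. By \eqref{eq1} with $r=-(5\lambda+3)$,
\begin{equation*}
\sum_{n=0}^{\infty}p_{-(5\lambda+3)}(n)\,q^n=(q;q)_\infty^{5\lambda+3}=\left((q;q)_\infty^{5}\right)^{\lambda}(q;q)_\infty^{3}\equiv (q^5;q^5)_\infty^{\lambda}\,(q;q)_\infty^{3}\pmod 5 .
\end{equation*}
The factor $(q^5;q^5)_\infty^{\lambda}$ is a series in $q^5$, hence contributes only exponents divisible by $5$; so the residue classes modulo $5$ of the exponents occurring in the whole product are governed entirely by $(q;q)_\infty^{3}$.

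Next I would analyze $(q;q)_\infty^{3}$ through Jacobi's identity $(q;q)_\infty^{3}=\sum_{k=0}^{\infty}(-1)^k(2k+1)q^{k(k+1)/2}$, and examine the triangular numbers $T_k=k(k+1)/2$ modulo $5$. From $T_{k+5}-T_k=5k+15\equiv 0\pmod 5$ the sequence $T_k\bmod 5$ is periodic with period $5$, and a one-line check gives $T_k\equiv 0,1,1,3,0$ for $k\equiv 0,1,2,3,4\pmod 5$ — wait, more precisely $T_k\equiv 0,1,3,1,0$ respectively, so $T_k$ never lands in the classes $2$ or $4$. This immediately disposes of $\ell=2$ and $\ell=4$, since there are no terms at all in those classes.

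The case $\ell=3$ is the crux and the main obstacle, because the triangular numbers \emph{do} meet the class $3\pmod 5$. The key observation is that $T_k\equiv 3\pmod 5$ forces $k\equiv 2\pmod 5$, and for every such $k$ the Jacobi coefficient satisfies $2k+1\equiv 0\pmod 5$. Hence each term of $(q;q)_\infty^{3}$ whose exponent is $\equiv 3\pmod 5$ has coefficient divisible by $5$ and vanishes modulo $5$; thus $(q;q)_\infty^{3}$ modulo $5$ supports only exponents $\equiv 0,1\pmod 5$. Combined with the preceding paragraph, the product $(q^5;q^5)_\infty^{\lambda}(q;q)_\infty^{3}$ supports, modulo $5$, only exponents $\equiv 0,1\pmod 5$, so the coefficient of $q^{5n+\ell}$ is divisible by $5$ for $\ell=2,3,4$, which is the claim. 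I would close by noting that the argument is uniform in $\lambda\in\mathbf{Z}$: only the mod-$5$ periodicity of $T_k$ and the vanishing of $2k+1$ are used, and the sign of $5\lambda+3$ never enters.
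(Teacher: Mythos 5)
Your proof is correct and follows essentially the same route as the paper: reduce $(q;q)_\infty^{5\lambda}$ to $(q^5;q^5)_\infty^{\lambda}$ via the binomial-theorem congruence, expand $(q;q)_\infty^3$ by Jacobi's identity, observe that $k(k+1)/2\not\equiv 2,4\pmod 5$, and that $k(k+1)/2\equiv 3\pmod 5$ forces $k\equiv 2\pmod 5$, whence $2k+1\equiv 0\pmod 5$. Your explicit remark that the congruence of units persists under inversion (handling $\lambda<0$) is a small point of added care that the paper leaves implicit.
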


\begin{theorem}\label{thm3} For any  integer $\lambda$, we have
$$p_{-(5\lambda+4)}(5n+4)\equiv0\pmod5.$$
\end{theorem}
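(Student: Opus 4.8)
The plan is to reduce the statement to Ramanujan's congruence $p(5n+4)\equiv0\pmod5$ by peeling off a single factor of $(q;q)_\infty^{-1}=\sum_{n\ge0}p(n)q^n$. Writing the exponent as $5\lambda+4=5(\lambda+1)-1$, equation~\eqref{eq1} gives
\begin{equation*}
\sum_{n=0}^\infty p_{-(5\lambda+4)}(n)q^n=(q;q)_\infty^{5\lambda+4}=\left((q;q)_\infty^{5}\right)^{\lambda+1}(q;q)_\infty^{-1}.
\end{equation*}
The structural input I would use is the Frobenius congruence $(q;q)_\infty^{5}\equiv(q^5;q^5)_\infty\pmod5$, which follows from $(1-q^k)^5\equiv1-q^{5k}\pmod5$ (the intermediate binomial coefficients $\binom{5}{1},\dots,\binom{5}{4}$ are all divisible by $5$) after taking the product over $k\ge1$. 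Substituting yields
\begin{equation*}
\sum_{n=0}^\infty p_{-(5\lambda+4)}(n)q^n\equiv(q^5;q^5)_\infty^{\lambda+1}\sum_{n=0}^\infty p(n)q^n\pmod5.
\end{equation*}

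Next I would extract the coefficient of $q^{5N+4}$. Since $(q^5;q^5)_\infty$ has constant term $1$, it is invertible in $\mathbf{Z}/5\mathbf{Z}[[q]]$, so $(q^5;q^5)_\infty^{\lambda+1}=\sum_{m\ge0}c_mq^{5m}$ is a genuine power series in $q^5$ for every integer $\lambda$, whether $\lambda+1$ is positive, zero, or negative. Because this first factor contributes only exponents divisible by $5$, comparing coefficients of $q^{5N+4}$ gives
\begin{equation*}
p_{-(5\lambda+4)}(5N+4)\equiv\sum_{m\ge0}c_m\,p\!\left(5(N-m)+4\right)\pmod5,
\end{equation*}
and every summand vanishes modulo $5$ by Ramanujan's congruence $p(5n+4)\equiv0\pmod5$ recalled in the Introduction. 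This settles the theorem for all integers $\lambda$; in the boundary case $\lambda=-1$ the first factor is $1$ and the statement is literally Ramanujan's congruence, while for other $\lambda$ the extra factor merely redistributes coefficients within the residue class $4\pmod5$ without disturbing divisibility.

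I do not anticipate a serious obstacle here: the heart of the matter is just the observation that multiplying $\sum p(n)q^n$ by an arbitrary power series in $q^5$ cannot create nonzero coefficients modulo $5$ in the class $4\pmod5$. The only point needing care is the bookkeeping that justifies working with $(q^5;q^5)_\infty^{\lambda+1}$ for negative exponents, which is handled by invertibility of series with unit constant term. The same template applies to Theorems~\ref{thm0} and~\ref{thm1}, where one instead factors out $(q;q)_\infty$ or $(q;q)_\infty^{3}$ and invokes Euler's pentagonal number theorem and Jacobi's identity $(q;q)_\infty^{3}=\sum_{n\ge0}(-1)^n(2n+1)q^{n(n+1)/2}$: the pentagonal exponents avoid $3,4\pmod5$, and whenever a triangular exponent is $\equiv3\pmod5$ the accompanying weight $2n+1$ is itself $\equiv0\pmod5$, so the surviving coefficients are again manifestly divisible by $5$.
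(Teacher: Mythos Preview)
Your argument is correct, but it follows a genuinely different route from the paper. The paper factors $(q;q)_\infty^{5\lambda+4}=(q;q)_\infty^{5\lambda}(q;q)_\infty^{4}$, rewrites $(q;q)_\infty^{4}=\eta^{4}q^{4}(q^{25};q^{25})_\infty^{4}$ via \eqref{eq1a}, and then invokes the Hirschhorn--Hunt computation $H_5(\eta^{4})=-5$ from Lemma~\ref{flem} to see that the $q^{5n+4}$-part is divisible by $5$. You instead shift the exponent, writing $5\lambda+4=5(\lambda+1)-1$ so that a factor of $(q;q)_\infty^{-1}=\sum p(n)q^n$ splits off, and then appeal directly to Ramanujan's congruence $p(5n+4)\equiv0\pmod5$ already recorded in the Introduction. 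Your approach is more elementary in that it treats the theorem as an immediate corollary of the classical congruence, requiring neither the $5$-dissection \eqref{eq2} nor the $H_5$ machinery; the paper's approach, by contrast, stays within a uniform $H_5$/dissection framework that handles all of Theorems~\ref{thm0}--\ref{thm6} with the same toolkit and does not presuppose Ramanujan's congruence as input. Your closing remarks on Theorems~\ref{thm0} and~\ref{thm1} do in fact coincide with the paper's treatment of those cases.
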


\begin{theorem}\label{thm5} For any integer $\lambda$ and $\ell=1,2,3,4$, we have
	$$p_{-(25\lambda+1)}(25n+5\ell+1)\equiv0\pmod5.$$
\end{theorem}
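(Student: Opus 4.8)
The plan is to work directly with the generating function. Since $\sum_{n\ge0} p_{-(25\lambda+1)}(n)q^n=(q;q)_\infty^{25\lambda+1}$, I would first reduce this series modulo $5$. The binomial theorem gives $(1-q^k)^5\equiv 1-q^{5k}\pmod5$, hence $(q;q)_\infty^{5}\equiv(q^{5};q^{5})_\infty\pmod5$ and, iterating once more, $(q;q)_\infty^{25}\equiv(q^{25};q^{25})_\infty\pmod5$. Raising to the $\lambda$-th power (and, for negative $\lambda$, using that a congruence modulo $5$ between two power series with constant term $1$ passes to their reciprocals, since $F^{-1}-G^{-1}=F^{-1}(G-F)G^{-1}$ has integer coefficients) I obtain the key reduction
\begin{equation*}
(q;q)_\infty^{25\lambda+1}\equiv (q^{25};q^{25})_\infty^{\lambda}\,(q;q)_\infty\pmod5 ,
\end{equation*}
valid for every integer $\lambda$.

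The heart of the argument is a $5$-dissection of $(q;q)_\infty$. By Euler's pentagonal number theorem, $(q;q)_\infty=\sum_{k=-\infty}^{\infty}(-1)^k q^{g_k}$ with $g_k:=k(3k-1)/2$. I would compute $g_k$ modulo $5$: since $g_k\equiv 4k^2+2k\pmod5$, a short check shows $g_k\equiv1\pmod5$ precisely when $k\equiv1\pmod5$. Writing $k=5t+1$ then gives $g_{5t+1}=25\cdot\frac{t(3t+1)}{2}+1=25\,g_{-t}+1$, so every pentagonal exponent in the residue class $1$ modulo $5$ actually lies in the class $1$ modulo $25$. Summing these terms and applying Euler's theorem once more (with $q$ replaced by $q^{25}$), the part of $(q;q)_\infty$ supported on exponents $\equiv1\pmod5$ equals $-q\,(q^{25};q^{25})_\infty$.

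To finish, I would extract from the key reduction the subseries carried by exponents $\equiv 5\ell+1\pmod{25}$ for $\ell=1,2,3,4$. Because $(q^{25};q^{25})_\infty^{\lambda}$ is a power series in $q^{25}$, every one of its exponents is $\equiv0\pmod5$, so the exponent-$\equiv1\pmod5$ component of the product $(q^{25};q^{25})_\infty^{\lambda}(q;q)_\infty$ is exactly $(q^{25};q^{25})_\infty^{\lambda}$ times the residue-$1$ part of $(q;q)_\infty$, namely $-q\,(q^{25};q^{25})_\infty^{\lambda+1}$. This is $-q$ times a series in $q^{25}$, so all of its nonzero terms sit in the single class $1$ modulo $25$. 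Consequently the coefficient of $q^{25n+5\ell+1}$ vanishes for $\ell=1,2,3,4$ (the targets $5\ell+1\equiv6,11,16,21\pmod{25}$), and reading off coefficients modulo $5$ yields $p_{-(25\lambda+1)}(25n+5\ell+1)\equiv0\pmod5$.

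The main obstacle is the dissection step: one must verify that the residue class $1$ modulo $5$ of the pentagonal exponents refines to the \emph{single} class $1$ modulo $25$, which is exactly what forces the four target progressions to be empty of pentagonal numbers. Everything else is the routine Frobenius reduction modulo $5$ together with bookkeeping of residues, and no $q$-series identity deeper than Euler's pentagonal number theorem is required.
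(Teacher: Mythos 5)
Your proof is correct, and it takes a genuinely different route from the paper's. The paper reduces $(q;q)_\infty^{25\lambda+1}$ to $(q^{25};q^{25})_\infty^{\lambda}\,\eta\, q\,(q^{25};q^{25})_\infty$ and then invokes the Hirschhorn--Hunt operator identity $H_5(\eta)=-1$ (Lemma \ref{flem}), which ultimately rests on Ramanujan's $5$-dissection \eqref{eq2} involving $R(q)$; extracting $q^{5n+1}$ leaves $-(q^5;q^5)_\infty^{\lambda+1}$, a series in $q^5$, whence the four progressions vanish. You reach the same intermediate object by a more elementary path: Euler's pentagonal number theorem plus the elementary computation that $g_k\equiv 1\pmod 5$ forces $k\equiv1\pmod5$ and hence $g_k\equiv1\pmod{25}$, which shows directly that the residue-$1$ component of $(q;q)_\infty$ is exactly $-q\,(q^{25};q^{25})_\infty$ --- this is precisely the content of $H_5(\eta)=-1$, but proved without the $R(q)$ dissection, and as an exact identity rather than a congruence. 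Two further points in your favour: you justify the passage to negative $\lambda$ (reciprocals preserve congruences between integral series with constant term $1$), which the paper silently assumes, and your argument is self-contained. The trade-off is that the paper's $H_5$ machinery uniformly handles the companion results needing $H_5(\eta^2)$ and $H_5(\eta^4)$ (Theorems \ref{thm3} and \ref{thm6}), where a bare pentagonal-number argument would be much more awkward.
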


\begin{theorem}\label{thm6} For any  integer $\lambda$ and $\ell=1,2,3,4$, we have
	$$p_{-(25\lambda+2)}(25n+5\ell+2)\equiv0\pmod5.$$
\end{theorem}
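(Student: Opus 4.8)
The plan is to reduce Theorem \ref{thm6} to the single case $\lambda=0$ and then to settle that case by a two-step $5$-dissection of $(q;q)_\infty^2$. First I would invoke the elementary congruence $(1-q^k)^5\equiv 1-q^{5k}\pmod5$, which gives $(q;q)_\infty^{25}\equiv(q^{25};q^{25})_\infty\pmod5$ and hence
$$(q;q)_\infty^{25\lambda+2}\equiv(q^{25};q^{25})_\infty^{\lambda}\,(q;q)_\infty^2\pmod5$$
for every integer $\lambda$ (inverting both sides in $(\mathbf Z/5)[[q]]$ when $\lambda<0$). Since $(q^{25};q^{25})_\infty^{\lambda}$ is a power series in $q^{25}$, it shifts exponents only by multiples of $25$ and so cannot move a term out of the class $5\ell+2\pmod{25}$. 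Writing $(q;q)_\infty^2=\sum_m p_{-2}(m)q^m$ as in \eqref{eq1}, the coefficient of $q^{25n+5\ell+2}$ on the right is a $\mathbf Z$-linear combination of the numbers $p_{-2}(25m+5\ell+2)$; thus Theorem \ref{thm6} for general $\lambda$ follows at once from the case $\lambda=0$, i.e. from $p_{-2}(25n+5\ell+2)\equiv0\pmod5$ for $\ell=1,2,3,4$.

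For the base case I would $5$-dissect $(q;q)_\infty$ itself. By Euler's pentagonal number theorem the exponents $n(3n-1)/2$ lie only in the residue classes $0,1,2\pmod5$, so there are power series $E_0,E_1,E_2$ with
$$(q;q)_\infty=E_0(q^5)+q\,E_1(q^5)+q^2E_2(q^5).$$
Two facts drive the argument. (i) Collecting the terms with $n\equiv1\pmod5$ and putting $n=5m+1$ gives $n(3n-1)/2=1+25\cdot m(3m+1)/2$, so by the pentagonal number theorem in the variable $q^{25}$ this subseries reassembles into $q\,E_1(q^5)=-q\,(q^{25};q^{25})_\infty$; that is, $E_1(q^5)=-(q^{25};q^{25})_\infty$. (ii) Cubing the dissection and invoking Jacobi's identity $(q;q)_\infty^3=\sum_{n\ge0}(-1)^n(2n+1)q^{n(n+1)/2}$, whose exponents $n(n+1)/2$ avoid the classes $2$ and $4\pmod5$, forces the residue-$2$ part of $(q;q)_\infty^3$ to vanish identically; a short computation (the $q^2$-coefficient equals $3E_0(E_1^2+E_0E_2)$, and $E_0$ is a unit) then yields the exact relation $E_1^2+E_0E_2=0$.

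Finally, squaring the dissection exhibits the residue-$2$ component of $(q;q)_\infty^2$ as $q^2\bigl(E_1^2+2E_0E_2\bigr)(q^5)$. The relation in (ii) collapses the bracket to $-E_1^2$, which by (i) equals $-(q^{25};q^{25})_\infty^2$, so the residue-$2$ part of $(q;q)_\infty^2$ is exactly $-q^2(q^{25};q^{25})_\infty^2$. Hence every term of $(q;q)_\infty^2$ whose exponent is $\equiv2\pmod5$ in fact has exponent $\equiv2\pmod{25}$, giving $p_{-2}(25n+5\ell+2)=0$ for $\ell=1,2,3,4$ (not merely modulo $5$); feeding this into the reduction above completes the theorem. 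The main obstacle is step (ii): organizing the dissection bookkeeping carefully enough to extract the \emph{exact} identity $E_1^2+E_0E_2=0$, together with the correct identification of the reassembled factor in (i). Once these are in hand everything else is formal, and the same template (dissecting $(q;q)_\infty^1$ for Theorem \ref{thm5}) should handle the companion result.
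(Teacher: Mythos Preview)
Your argument is correct. The overall architecture matches the paper's: reduce $(q;q)_\infty^{25\lambda}$ to $(q^{25};q^{25})_\infty^{\lambda}$ modulo~$5$, then analyse the $5$-dissection of $(q;q)_\infty^2$. Where you diverge is in the toolkit for that dissection. The paper quotes Ramanujan's identity \eqref{eq2}, writes $(q;q)_\infty^2=\eta^2 q^2(q^{25};q^{25})_\infty^2$, and then invokes the Hirschhorn--Hunt value $H_5(\eta^2)=-1$ from Lemma~\ref{flem} to conclude that the residue-$2$ part is $-q^2(q^{25};q^{25})_\infty^{\lambda+2}$. You instead build the dissection from scratch via Euler's pentagonal number theorem, identify $E_1(q^5)=-(q^{25};q^{25})_\infty$ directly, and extract the relation $E_1^2+E_0E_2=0$ from the vanishing of the residue-$2$ part of Jacobi's expansion of $(q;q)_\infty^3$. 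In the paper's variables one has $E_0=(q^{25};q^{25})_\infty R(q^5)$, $E_1=-(q^{25};q^{25})_\infty$, $E_2=-(q^{25};q^{25})_\infty/R(q^5)$, so your identity $E_1^2+E_0E_2=0$ is exactly the trivial relation $1+R\cdot(-R^{-1})=0$ hidden inside \eqref{eq2}; your contribution is to recover it without ever naming $R(q)$. The payoff is a more self-contained proof that needs only Euler and Jacobi, and as you observe, the $\lambda=0$ case yields the exact vanishing $p_{-2}(25n+5\ell+2)=0$ rather than merely a congruence. The cost is the bookkeeping in step~(ii), whereas the paper simply cites \cite{MD}.
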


\section{Preliminaries}
To prove Theorems \ref{thm0}-\ref{thm6} we will employ some $q-$identities.
Ramanujan \cite{SR} stated that, if $$R(q)=\frac{(q^2; q^5)_\infty(q^3;q^5)_\infty}{(q;q^5)_\infty(q^4;q^5)_\infty}$$ then
\begin{equation}\label{eq2}
\left(q;q\right)_\infty=\left(q^{25};q^{25}\right)_\infty\left(R(q^5)-q-\frac{q^2}{R(q^5)}\right).
\end{equation}
One can rewrite \eqref{eq2} as
\begin{equation}\label{eq3}
\eta=q^{-1}R-1-qR^{-1}, 
\end{equation}where \begin{equation}\label{eq1a}
\eta=\frac{\left(q;q\right)_\infty}{q\left(q^{25};q^{25}\right)_\infty}.
\end{equation}
Using \eqref{eq3} Hirschhorn and Hunt \cite{MD}  proved the following lemma:
\begin{lemma}\cite{MD}\label{flem}
We have$${H_5(\eta)}=-1,H_5{(\eta^2)}=-1,H_5(\eta^3)=5,  H_5(\eta^4)=-5.$$  where $H_5$ is an operator  which acts on series of positive and negative powers of a single variable, and simply picks out the term in which the power is congruent to 0 modulo 5.
\end{lemma}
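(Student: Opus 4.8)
The plan is to exploit the explicit shape of $\eta$ given in \eqref{eq3}, namely $\eta = q^{-1}R - 1 - qR^{-1}$ with $R = R(q^5)$, together with the single structural fact that drives everything: $R$ is a power series in $q^5$. Indeed, each product $(q^j;q^5)_\infty$ with $j\in\{1,2,3,4\}$ equals $1$ at $q=0$, so $R(q)$ has constant term $1$; replacing $q$ by $q^5$ shows that $R=R(q^5)$ is an invertible power series in $q^5$, and hence both $R$ and $R^{-1}$ are series in which every exponent of $q$ is a multiple of $5$. Consequently, multiplying by $R$ or $R^{-1}$ never alters the residue modulo $5$ of an exponent.

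With this in hand I would write $\eta = A + B + C$, where $A=q^{-1}R$, $B=-1$, and $C=-qR^{-1}$. By the previous observation, every monomial appearing in $A$, $B$, $C$ has exponent congruent to $4$, $0$, $1 \pmod 5$, respectively. Expanding $\eta^{\,n}$ by the multinomial theorem, a typical term is
\begin{equation*}
\binom{n}{i,j,k}A^iB^jC^k=\binom{n}{i,j,k}(-1)^{j+k}\,q^{\,k-i}\,R^{\,i-k},\qquad i+j+k=n,
\end{equation*}
and all of its exponents are $\equiv k-i \pmod 5$. Therefore $H_5$ retains precisely the terms with $k\equiv i \pmod 5$.

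The feature special to this lemma is that we only require $n\in\{1,2,3,4\}$. For such $n$ one has $|k-i|\le i+k\le n<5$, so the congruence $k\equiv i\pmod 5$ forces the exact equality $k=i$; then $R^{\,i-k}=R^0=1$ and the surviving contribution is the genuine constant $\binom{n}{i,j,i}(-1)^{\,j+i}$, summed over all $2i+j=n$. I would then simply total these diagonal terms: for $n=1$ the only solution $(i,j)=(0,1)$ gives $-1$; for $n=2$ the solutions $(0,2),(1,0)$ give $1-2=-1$; for $n=3$ the solutions $(0,3),(1,1)$ give $-1+6=5$; and for $n=4$ the solutions $(0,4),(1,2),(2,0)$ give $1-12+6=-5$. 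This yields the four asserted values.

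The computation is essentially routine once the setup is in place, so the only real point demanding care is the justification that the off-diagonal terms ($k\neq i$, which would contribute nonconstant powers of $R$) cannot occur — and this is exactly the elementary bound $|k-i|<5$ valid only because we stop at the fourth power. I would flag this as the crux: for $n\ge 5$ such terms do survive and $H_5(\eta^{\,n})$ ceases to be a constant, which is precisely why the lemma is stated only for $\eta,\eta^2,\eta^3,\eta^4$. Secondarily, I would keep the sign factor $(-1)^{j+k}$ tracked carefully, since a single sign slip flips $5$ to $-5$.
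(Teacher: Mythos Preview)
Your argument is correct. The paper itself does not prove this lemma; it merely quotes it from Hirschhorn and Hunt \cite{MD}, so there is no in-paper proof to compare against. Your derivation is in fact the standard one from that reference: expand $\eta=q^{-1}R-1-qR^{-1}$ with $R=R(q^5)$ by the trinomial theorem, observe that only the diagonal terms $i=k$ survive under $H_5$ for $n\le 4$, and sum. The identification of the crux (that $|k-i|<5$ is what forces $k=i$, and that this fails from $n=5$ onward) is exactly right.
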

\noindent Note: Here we replaced the symbol $H$ in \cite{MD} by $H_5$.

\begin{lemma}\label{lem2} For any prime $p$, we have
$$\left(q;q\right)_\infty^5\equiv\left(q^5;q^5\right)_\infty\pmod 5.$$ 
\end{lemma}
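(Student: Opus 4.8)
The plan is to prove $(q;q)_\infty^5 \equiv (q^5;q^5)_\infty \pmod 5$ using the freshman's dream together with the definition of the $q$-Pochhammer product. First I would recall that for any prime $p$ and any integers $a,b$, the binomial theorem modulo $p$ gives $(a+b)^p \equiv a^p + b^p \pmod p$, since every intermediate binomial coefficient $\binom{p}{k}$ for $1\le k\le p-1$ is divisible by $p$. Applying this with $p=5$ to each factor of the product, I would write
\begin{equation*}
(q;q)_\infty^5 = \prod_{k=1}^\infty (1-q^k)^5 \equiv \prod_{k=1}^\infty (1-q^{5k}) \pmod 5,
\end{equation*}
where in each factor I used $(1-q^k)^5 \equiv 1 - q^{5k} \pmod 5$. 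The right-hand product is exactly $(q^5;q^5)_\infty$ by definition.

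To make the passage to the infinite product rigorous rather than merely formal, I would interpret the congruence coefficientwise: two power series in $\mathbf{Z}[[q]]$ are congruent modulo $5$ if all corresponding coefficients are congruent modulo $5$. The key observation is that the map sending a power series to its reduction modulo $5$ is a ring homomorphism onto $(\mathbf{Z}/5\mathbf{Z})[[q]]$, and in that ring the Frobenius endomorphism $f(q)\mapsto f(q)^5$ satisfies $f(q)^5 = f(q^5)$ because $5$ is prime and the characteristic is $5$. Thus raising the entire product to the fifth power and raising each factor, then reducing, commute; I would apply Frobenius to the finite partial products $\prod_{k=1}^N (1-q^k)$ and pass to the limit, noting that each coefficient of the infinite product stabilizes after finitely many factors so no analytic subtlety about convergence of the limit enters the congruence.

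The statement is essentially the standard "freshman's dream" consequence of the Frobenius endomorphism, so there is no serious obstacle; the only point requiring a moment's care is justifying that the elementary identity $(1-q^k)^5\equiv 1-q^{5k}\pmod 5$ on individual factors lifts to the infinite product. This is handled cleanly by the ring-homomorphism viewpoint above, since congruence modulo $5$ is preserved under multiplication and the product is well-defined term by term in $\mathbf{Z}[[q]]$. I would phrase the final argument entirely in $(\mathbf{Z}/5\mathbf{Z})[[q]]$, where the chain $(q;q)_\infty^5 = \bigl(\prod_k (1-q^k)\bigr)^5 = \prod_k (1-q^k)^5 = \prod_k (1-q^{5k}) = (q^5;q^5)_\infty$ becomes an exact identity, and then lift it back to the asserted congruence in $\mathbf{Z}[[q]]$.
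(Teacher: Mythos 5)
Your argument is correct and is exactly what the paper intends: its entire proof is ``This follows easily from the binomial theorem,'' i.e.\ the factorwise congruence $(1-q^k)^5\equiv 1-q^{5k}\pmod 5$ that you spell out, with the coefficientwise/Frobenius justification being the standard way to make the infinite-product step rigorous. No substantive difference from the paper's approach.
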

\begin{proof}This follows easily from the binomial theorem.\end{proof}

\begin{lemma}\label{lem4}\cite[p. 53]{bcb}, We have
$$(q;q)^3_\infty=\sum_{k=0}^\infty(-1)^k(2k+1)q^{k(k+1)/2}.$$
\end{lemma}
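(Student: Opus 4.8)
The statement is the classical Jacobi identity, and the plan is to obtain it as a specialization of the Jacobi triple product followed by a differentiation in an auxiliary variable. Starting from the standard form $\prod_{n\ge1}(1-q^{2n})(1+q^{2n-1}w)(1+q^{2n-1}w^{-1})=\sum_{n=-\infty}^\infty q^{n^2}w^n$, I would make the substitutions $q\mapsto q^{1/2}$ and $w\mapsto -zq^{1/2}$ to bring it into the form
$$\sum_{n=-\infty}^\infty(-1)^nz^nq^{n(n+1)/2}=\prod_{n=1}^\infty(1-q^n)(1-zq^n)(1-z^{-1}q^{n-1}).$$
Next I would isolate the $n=1$ factor $(1-z^{-1})$ from the last product and reindex the remainder, so that the right-hand side becomes $(1-z^{-1})(q;q)_\infty(zq;q)_\infty(z^{-1}q;q)_\infty$. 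Writing $F(z)$ for either side, the target identity is morally ``$F$ at $z=1$,'' but the factor $(1-z^{-1})$ vanishes there; indeed direct substitution yields only the trivial identity $0=0$, since the bilateral sum cancels in pairs $n\leftrightarrow-n-1$.

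The key idea, therefore, is to differentiate with respect to $z$ before setting $z=1$. Writing $F(z)=(1-z^{-1})G(z)$ with $G(z)=(q;q)_\infty(zq;q)_\infty(z^{-1}q;q)_\infty$, I would compute $F'(z)=z^{-2}G(z)+(1-z^{-1})G'(z)$, so that the second term drops out at $z=1$ and $F'(1)=G(1)=(q;q)_\infty^3$. On the series side, differentiating term by term gives $F'(z)=\sum_{n}(-1)^n n\,z^{n-1}q^{n(n+1)/2}$, hence $F'(1)=\sum_{n=-\infty}^\infty(-1)^n n\,q^{n(n+1)/2}$. This already expresses the cube as a bilateral sum over $\mathbf{Z}$.

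Finally I would fold the bilateral sum onto the nonnegative integers using the involution $n\mapsto -n-1$, which fixes the exponent $n(n+1)/2$ and sends the coefficient $(-1)^n n$ to $(-1)^n(n+1)$; adding the two contributions produces $(-1)^n(2n+1)q^{n(n+1)/2}$ for each $n\ge0$, which is exactly the claimed right-hand side. The step I expect to require the most care is the term-by-term differentiation: one must justify passing the $z$-derivative through the infinite product and the bilateral series, which is legitimate because both converge absolutely and locally uniformly in $z$ on a punctured neighbourhood of $z=1$ for $|q|<1$. Once that analytic point is secured, the remaining manipulations are purely formal and routine.
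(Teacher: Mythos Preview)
Your argument is correct: the substitution into the Jacobi triple product, the isolation of the factor $(1-z^{-1})$, the differentiation trick at $z=1$, and the folding $n\leftrightarrow -n-1$ all go through exactly as you describe, and the analytic justification you flag is indeed the only point requiring care.

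As for comparison with the paper: there is nothing to compare. The paper does not prove this lemma at all; it is simply quoted as a known identity with a page reference to Berndt's \emph{Ramanujan's Notebooks, Part III}. Your derivation via differentiation of the triple product is the standard classical proof and would be perfectly appropriate here, but the authors treat the result as background and move on.
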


\section{Proof of Theorems \ref{thm0}-\ref{thm6}}
In this section, all congruences are to the modulus 5.

\noindent\textbf{\textit{{Proof of Theorem \ref{thm0}}:}} 
 Setting $r=-(5\lambda+1)$ in  \eqref{eq1}, we obtain
\begin{equation}\label{eq3.4jj}
\sum_{n=0}^\infty{p_{-(5\lambda+1)}(n)q^n}=(q;q)_\infty^{5\lambda+1}=(q;q)_\infty^{5\lambda}(q;q)_\infty.
\end{equation}
Employing Lemma \ref{lem2} and \eqref{eq2} in \eqref{eq3.4jj}, we obtain
\begin{equation}\label{eq3.5jj}
\sum_{n=0}^\infty{p_{-(5\lambda+1)}(n)q^n}\equiv(q^5;q^5)_\infty^{\lambda}\left(q^{25};q^{25}\right)_\infty\left(R(q^5)-q-\frac{q^2}{R(q^5)}\right).
\end{equation}
Extracting the terms involving $q^{5n+\ell}$, where $\ell=3,4$, in both sides of \eqref{eq3.5jj}, we arrive at the desired result.
\vskip 3mm

\noindent\textbf{\textit{{Proof of Theorem \ref{thm1}}:}} 
  Setting $r=-(5\lambda+3)$ in  \eqref{eq1}, we obtain
\begin{equation}\label{eq1.4}
\sum_{n=0}^\infty{p_{-(5\lambda+3)}(n)q^n}=(q;q)_\infty^{5\lambda+3}=(q;q)_\infty^{5\lambda}(q;q)^3_\infty.
\end{equation}
Employing Lemma \ref{lem2} in \eqref{eq1.4}, we obtain
\begin{equation}\label{eq1.5}
\sum_{n=0}^\infty{p_{-(5\lambda+3)}(n)q^n}\equiv(q^5;q^5)_\infty^{\lambda}(q;q)^3_\infty.
\end{equation}
Using Lemma \ref{lem4} in \eqref{eq1.5}, we obtain
\begin{equation}\label{eq1.6}
\sum_{n=0}^\infty{p_{-(5\lambda+3)}(n)q^n}\equiv(q^5;q^5)_\infty^{\lambda}\times\sum_{k=0}^\infty(-1)^k(2k+1)q^\frac{k(k+1)}{2}.
\end{equation} Since there exist no non-negative integer $k$ such that  $k(k+1)/2$ is congruent to 2 or 4 modulo 5, so extracting the terms involving $q^{5n+k}$, where $k=2, 4$, in both sides of \eqref{eq1.6}, we prove the cases $\ell=2$ and 4.

Again,  extracting the terms involving $q^{5n+3}$ in both sides of \eqref{eq1.6} and  noting $2k+1\equiv 0 \pmod 5$ for $k=2$, we arrive at the case $\ell=3$.

\vskip 3mm
\noindent\textbf{\textit{{Proof of Theorem \ref{thm3}}:}} 
 Setting $r=-(5\lambda+4)$ in  \eqref{eq1}, we obtain
\begin{equation}\label{eq3.4}
\sum_{n=0}^\infty{p_{-(5\lambda+4)}(n)q^n}=(q;q)_\infty^{5\lambda+4}=(q;q)_\infty^{5\lambda}(q;q)^4_\infty.
\end{equation}
Employing Lemma \ref{lem2} and \eqref{eq1a} in \eqref{eq3.4}, we obtain
\begin{equation}\label{eq3.5}
\sum_{n=0}^\infty{p_{-(5\lambda+4)}(n)q^n}\equiv(q^5;q^5)_\infty^{\lambda}{\eta}^4q^4(q^{25};q^{25})^4_\infty.
\end{equation}
Extracting the terms involving $q^{5n+4}$ and using the operator $H_5$ in \eqref{eq3.5}, we obtain
\begin{equation}\label{eq3.6}
\sum_{n=0}^\infty{p_{-(5\lambda+4)}(5n+4)q^{5n+4}}\equiv(q^5;q^5)_\infty^{\lambda}H_5\left({\eta}^4\right)q^4(q^{25};q^{25})^4_\infty.
\end{equation}
Using Lemma \ref{flem} in \eqref{eq3.6}, we obtain
\begin{equation}\label{eq3.7}
\sum_{n=0}^\infty{p_{-(5\lambda+4)}(5n+4)q^{5n+4}}\equiv(-5)(q^5;q^5)_\infty^{\lambda}q^4(q^{25};q^{25})^4_\infty.
\end{equation}
Now the desired result follows from  \eqref{eq3.7} and the fact that $5\equiv0\pmod5$.

\vskip 3mm

\noindent\textbf{\textit{{Proof of Theorem \ref{thm5}}:}} 
Setting $r=-(25\lambda+1)$ in  \eqref{eq1}, we obtain
\begin{equation}\label{eq3.4j}
\sum_{n=0}^\infty{p_{-(25\lambda+1)}(n)q^n}=(q;q)_\infty^{25\lambda+1}=(q;q)_\infty^{25\lambda}(q;q)_\infty.
\end{equation}
Employing Lemma \ref{lem2} and \eqref{eq1a} in \eqref{eq3.4j}, we obtain
\begin{equation}\label{eq3.5j}
\sum_{n=0}^\infty{p_{-(25\lambda+1)}(n)q^n}\equiv(q^{25};q^{25})_\infty^{\lambda}{\eta}q(q^{25};q^{25})_\infty.
\end{equation}
Extracting the terms involving $q^{5n+1}$ and using the operator $H_5$ in \eqref{eq3.5j}, we obtain
\begin{equation}\label{eq3.6j}
\sum_{n=0}^\infty{p_{-(25\lambda+1)}(5n+1)q^{5n+1}}\equiv{q(q^{25};q^{25})_\infty^{\lambda+1}}H_5\left({\eta}\right).
\end{equation}
Using Lemma \ref{flem} in \eqref{eq3.6j},  dividing by $q$, and  replacing $q^5$ by $q$ , we obtain
\begin{equation}\label{eq3.7j}
\sum_{n=0}^\infty{p_{-(25\lambda+1)}(5n+1)q^n}\equiv(-1)(q^5;q^5)_\infty^{\lambda+1}.
\end{equation}
Extracting the terms involving $q^{5n+\ell}$, where $\ell=1,2,3,4,$ in both sides of \eqref{eq3.7j}, we arrive at the desired result. 
\vskip 3mm

\noindent\textbf{\textit{{Proof of Theorem \ref{thm6}}:}}
Setting $r=-(25\lambda+2)$ in  \eqref{eq1}, we obtain
\begin{equation}\label{eq3.4jjj}
\sum_{n=0}^\infty{p_{-(25\lambda+2)}(n)q^n}=(q;q)_\infty^{25\lambda+2}=(q;q)_\infty^{25\lambda}(q;q)^2_\infty.
\end{equation}
Employing Lemma \ref{lem2} and \eqref{eq1a} in \eqref{eq3.4jjj}, we obtain
\begin{equation}\label{eq3.5jjj}
\sum_{n=0}^\infty{p_{-(25\lambda+2)}(n)q^n}\equiv(q^{25};q^{25})_\infty^{\lambda}{\eta}^2q^2(q^{25};q^{25})^2_\infty.
\end{equation}
Extracting the terms involving $q^{5n+2}$ and using the operator $H_5$ in \eqref{eq3.5jjj}, we obtain
\begin{equation}\label{eq3.6jj}
\sum_{n=0}^\infty{p_{-(25\lambda+2)}(5n+2)q^{5n+2}}\equiv{q^2(q^{25};q^{25})_\infty^{\lambda+2}}H_5\left({\eta}^2\right).
\end{equation}
Using Lemma \ref{flem} in \eqref{eq3.6jj}, dividing by $q^2$, and replacing $q^5$ by $q$ , we obtain
\begin{equation}\label{eq3.7jj}
\sum_{n=0}^\infty{p_{-(25\lambda+2)}(5n+2)q^n}\equiv1\times(q^5;q^5)_\infty^{\lambda+2}.
\end{equation}
Extracting the terms involving $q^{5n+\ell}$, where $\ell=1,2,3,4,$ in both sides of \eqref{eq3.7jj}, we arrive at the desired result.

\end{document}